\documentclass[letterpaper,12pt,draft]{article}
\usepackage[utf8]{inputenc}
\usepackage[=layout=letterpaper,includefoot,nohead]{geometry}
\geometry{
   tmargin=1in,
   bmargin=1in,
   lmargin=1.5in,
   rmargin=1in}
\usepackage{setspace}
\doublespacing
\usepackage{pageslts}
\usepackage{amsmath}
\usepackage{amsfonts}
\usepackage{amssymb}
\usepackage{amsthm}
\usepackage{indentfirst}
\usepackage{pdflscape}
\usepackage{tocloft}
\usepackage{titlesec}
\usepackage[sorting=nyt]{biblatex}
\usepackage{hyperref}
\hypersetup{
    pdftitle={A Comparison of Pair and Triple Partition Relations},
    pdfauthor={Jonathon Eric Beers},
    final=true
}

% Set ToC depth to go down to sub-sections, thereby ignoring sub-sub-sections and deeper.
\setcounter{tocdepth}{2}

% Should section titles in ToC be all caps, or should it exactly match section titles?
% If all caps, use this macro; else, ignore it.
% \newcommand{\mysection}[1]{\section[\MakeUppercase{#1}]{#1}}

% Automatically Format Table of Contents according to specification, minus allcaps-ing the main entry names

\renewcommand{\contentsname}{Table of Contents}

% Automatically format List of Tables according to specification

\renewcommand{\listtablename}{List of Tables}

% Modify bibliography title to conform to specification
\defbibheading{bibliography}{\centering \MakeUppercase{\refname}}

% Add bibfile
\addbibresource{thesis.bib}

% Define theorem environment styles.
\theoremstyle{plain} % Used for Theorems/Propositions/Corollaries/Lemmae/Facts

\newtheorem{thm}{Theorem}
\newtheorem{cor}{Corollary}

\theoremstyle{definition} % Used for Definitions/Questions/Conjectures/Exercises
\newtheorem{defn}{Definition}
\newtheorem{quest}{Question}

\begin{document}
% \pagenumbering{gobble}
\pagenumbering{roman}
\singlespacing

\newcommand{\candidate}{Jonathon Eric Beers}
\newcommand{\thesismajor}{Mathematics}
\newcommand{\thesistitle}{A Comparison of Pair and Triple Partition Relations}

\newenvironment{sig}[1]{
	\begin{tabular}{@{}p{3.4in}@{\hspace{1em}}p{2in}@{}}
	\hrulefill & \hrulefill \\
    {#1} & Date\\
}
{
	\end{tabular}
}

\newcommand{\signature}[2]{
	\begin{sig}{#1}
    {#2} & \\
	\end{sig}
}

\newcommand{\signaturemajor}[2]{
	\begin{sig}{#1}
    {#2} & \\
	Major Professor & \\
	\end{sig}
}

% Thesis approval page

\null
\vspace{1in}
\begin{center}
THESIS APPROVAL
\end{center}
\vspace{5mm}
\begin{tabbing}
Candidate:\hspace{2cm} \= \candidate\\
\\
Major:\> \thesismajor\\
\\
Thesis Title:\> \thesistitle\\
\end{tabbing}
Approval:\\[1.5cm]
\noindent
\signaturemajor{Thomas Leathrum}{Professor of Mathematics}
\\[2.0\baselineskip]
\signature{Jaedeok Kim}{Professor of Mathematics}
\\[3.0\baselineskip]
\signature{David Dempsey}{Professor of Mathematics}
\\[3.0\baselineskip]
\signature{Andrea Porter}{Director, Graduate Studies}
\thispagestyle{empty}
\newpage

% Title page

\null
\vspace{1in}
\begin{center}
\MakeUppercase{\thesistitle}\\
\vspace{2cm}
A Thesis Submitted to the\\
Graduate Faculty\\
of Jacksonville State University\\
in Partial Fulfillment of the\\
Requirements for the Degree of\\
Master of Science\\
with a Major\\
in \thesismajor\\
\vspace{2cm}
By\\
\MakeUppercase{\candidate}
\vfill
Jacksonville, Alabama\\
May 4, 2018
\end{center}
\thispagestyle{empty}
\newpage

% Copyright page

% \pagenumbering{roman}
\setcounter{page}{3}
\null
\vfill
\begin{center}
copyright 2018\\
% All Rights Reserved\\[2cm]
This thesis is licensed under a Creative Commons Attribution 4.0 International License.  To view a copy of this license, visit \url{http://creativecommons.org/licenses/by/4.0/}.\\[2cm]
\begin{sig}{\candidate}
\end{sig}
\end{center}
\newpage

% Abstract page

\doublespacing

\null
\vspace{1in}
\begin{center}
ABSTRACT\\
\end{center}
\hspace{5mm}This paper considers three different partition relations from partition calculus, two of which are pair relations and one of which is a triple relation.  An examination of the first partition relation and the ramification argument used to prove it will motivate questions regarding how to strengthen it.  These questions will lead to an examination of the second partition relation and its submodel argument where the answers to those questions will motivate further questions.  The final partition relation will then be compared to the prior two and an analysis of its strength will motivate final questions that will guide future mathematicians who wish to prove claims about positive or negative triple partition relations.
\par
% vii., 19 pages
\lastpageref{pagesLTS.roman}., \lastpageref{pagesLTS.arabic} pages
\newpage

% Acknowledgments page

\null
\vspace{1in}
\begin{center}
ACKNOWLEDGMENTS
\end{center}
\hspace{5mm}My eternal gratitude goes toward Dr.~Tom Leathrum for lifting me from na\"{i}ve set theory to ZFC and introducing me to partition calculus.%  I should include more to this.
\par
I would also like to thank Dr.~Jaedeok Kim and Dr.~David Dempsey for agreeing to sit on my committee and taking the time to familiarize themselves with partition calculus.%  I am indebted to them for their recommendation to include examples of simple partition relations in the introductory section.

% Auto-generated Table of Contents page

\newpage
\null
\vspace{1in}
\tableofcontents
\phantomsection
% \addcontentsline{toc}{section}{\MakeUppercase{\contentsname}}
\addcontentsline{toc}{section}{\texorpdfstring{\MakeUppercase{\contentsname}}{\contentsname}}

% Auto-generated List of Tables page

\newpage
\phantomsection
% \addcontentsline{toc}{section}{\MakeUppercase{\listtablename}}
\addcontentsline{toc}{section}{\texorpdfstring{\MakeUppercase{\listtablename}}{\listtablename}}
\null
\vspace{1in}
\listoftables

\vfill
\clearpage
\pagenumbering{arabic}
\setcounter{page}{1}

% These should be includes so each section is auto-newpaged.
\section[PREREQUISITES]{Prerequisites}
% \mysection{Prerequisites}
\subsection{A Note of Caution}
As with any subject, a fuller treatment of any results in partition calculus requires a great deal of background knowledge.  To discuss the three partition relations in depth we must assume some background knowledge in advanced set theory, but to make the results accessible we must review definitions not commonly covered in most graduate mathematics courses.  To compromise, only the bare minimum will be introduced without exploring their implications except insofar as an examination of our three partition relations do so.  Beyond that we must defer to various reference texts.

\subsection{Ordinals and Cardinals}
The objects of study are ultimately ordinals, which describe order isomorphisms between well-ordered sets, and cardinals, which describe injections between sets.

\begin{defn}
	Two totally ordered sets are \textbf{order isomorphic} iff there exists an order-preserving bijection between them.  Such sets are said to have the same \textbf{order type}.
\end{defn}

\begin{defn}[Ordinal]
	An \textbf{ordinal} is a set strictly well-ordered under element inclusion such that every element of the set is a subset of the set.  The \textbf{order type} of a well-ordered set $A$, denoted $\text{ot}(A)$, is the least ordinal $\alpha$ order isomorphic to $A$.
\end{defn}

A good reader can prove that $\varnothing$ is an ordinal, that $\alpha \cup \{\alpha\}$ is an ordinal for any ordinal $\alpha$, and a set of ordinals that contains every element of every ordinal it contains is itself an ordinal. By defining $0$ as $\varnothing$ and $\alpha + 1$, the ordinal successor of $\alpha$, as $\alpha \cup \{\alpha\}$ we can acquire any arbitrary finite ordinal.  By using Axiom of Infinity, which asserts the existence of a set that contains $\varnothing$ and contains the successor of $\alpha$ if it contains $\alpha$, we can acquire sets containing all finite ordinals.  Since ordinals are well-ordered there is a least ordinal that contains all finite ordinals.

\begin{defn}
	We denote by $\omega$ the least ordinal that contains all finite ordinals.
\end{defn}

\begin{defn}[Cardinal]
	Two sets $A$ and $B$ are \textbf{equinumerous} iff there exists a bijection between them.  The \textbf{cardinality} of a set $A$, denoted $|A|$, is the least ordinal equinumerous to $A$.  An ordinal $\kappa$ is a \textbf{cardinal} iff $|\kappa| = \kappa$.
\end{defn}

As a caveat, consider that if every set is equinumerous to some ordinal then every set can be well-ordered, but this is logically equivalent to the Axiom of Choice.  Since this paper intends to work in ZFC where the Axiom of Choice holds, our definition of cardinality remains well defined.
\par
Note that every finite ordinal is also a cardinal while no infinite successor ordinals are cardinals.  Since a set of cardinals less than some given ordinal is a subset of said given ordinal, cardinals are well-ordered and we can make sense of cardinal successors.  Since no infinite successor ordinal is a cardinal, the ordinal successor of an infinite cardinal is not the cardinal successor of an infinite cardinal.

\begin{defn}
	For any cardinal $\kappa$ we denote by $\kappa^{+}$ the least ordinal not equinumerous to $\kappa$ that contains $\kappa$.
\end{defn}

That it is not equinumerous means it must be a distinct cardinality.  That it contains $\kappa$ means that it does not precede $\kappa$.  That it is the least such ordinal means that it is its own cardinality and thus a cardinal.

\begin{defn}
	We denote by $\omega_{1}$ the least ordinal that contains all countable ordinals; equivalently, $\omega_{1}$ is the least uncountable ordinal.
\end{defn}

Since any countably infinite ordinal is equinumerous with $\omega$ and since $\omega_{1}$ is the least uncountable ordinal, thus $\omega^{+} = \omega_{1}$. 

\subsection{Partition Relations}

\begin{defn}
	Let $X$ be an ordered set, $Y$ a set, $\alpha$ an ordinal, and $\kappa$ a cardinal.  Then \[[X]^{\alpha} = \{A \subseteq X \mid \textrm{ot}(A) = \alpha\}\] and \[[Y]^{\kappa} = \{A \subseteq Y \mid |A| = \kappa\}.\]
%     \begin{align*}
%     	[X]^{\alpha} &= \{A \subseteq X \mid \textrm{ot}(A) = \alpha\} \\
%         [Y]^{\kappa} &= \{A \subseteq Y \mid |A| = \kappa\}
%     \end{align*}
\end{defn}

Equivalently, $[X]^{\alpha}$ is the set of all $\alpha$-order type subsets of $X$ and $[X]^{\kappa}$ is the set of all $\kappa$-cardinality subsets of $X$.  Since sets of order types 2 and 3 are also sets of cardinality 2 and 3 and since we limit our focus to pair and triple partition relations, the distinction between subsets of a given order type and subsets of a given cardinality will be unimportant.

\begin{defn}[Partitions]
	For $X$ a set and $I$ an indexing set, a \textbf{partition of $X$ in $I$ colors} is a function $\chi: X \rightarrow I$.  An \textbf{$r$-partition of $X$ in $I$ colors} is a partition of $[X]^{r}$ in $I$ colors.
\end{defn}

We represent the function with $\chi$, the first letter of the Greek word for ``color,'' since the function metaphorically paints every element of $[X]^{r}$ with some color.
\par
This modern definition corresponds to the one by Hajnal and Larson in \cite{hajnal+larson}.  An older, alternative definition in \cite{cst} that permits non-disjoint partitions defines a function $\pi$ so that $\pi(i) = \chi^{-1}(\{i\})$ and the union of the images under $\pi$ of elements from $I$ is $[X]^{r}$.

\begin{defn}[Partition Relations for Ordinals]
	Let $\alpha$ be an ordinal, $r$ a nonzero finite ordinal, $I$ any indexing set, and for all $i \in I$ let $\beta_{i}$ be an ordinal.  Then the partition relation
    \begin{align*}
    	\alpha \rightarrow \left(\beta_{i}\right)^{r}_{i \in I}
    \end{align*}
    pertains if and only if for all sets $A$ such that $\text{ot}(A) = \alpha$ and for all $r$-partitions $\chi$ of $A$ in $I$ colors there exists some $i \in I$ and some $B \subseteq A$ such that $\text{ot}(B) = \beta_{i}$ and $\chi([B]^{r}) \subseteq \{i\}$.  If for some $A$ and $\chi$ there are such $B$ and $i$, $B$ is said to be \textbf{homogeneous} in $A$ with respect to $\chi$ in color $i$.
\end{defn}

Concisely put, partition calculus is the study of finding the largest homogeneous subsets guaranteed to appear in any set of a given order type.
\par
To borrow terminology from Hajnal and Larson, one may informally refer to $\alpha$ as the \emph{resource}, to $I$ as the \emph{colors}, and to $\{\beta_{i}\}_{i \in I}$ as the \emph{goals} of the partition relation.  When $r = 2$, we refer to the partition relation as a pair partition relation; similarly, when $r = 3$ we refer to the partition relation as a triple partition relation.
\par
There are also partition relations for order types more generally and partition relations for cardinalities; however, we shall restrict attention to partition relations for ordinal resources, ordinal colors, and ordinal goals.
\par
In this paper we focus on $\omega_{1}$ as the resource and some ordinal as the colors.

% Move the cofinality definition to the Unbalanced Erdos-Rado section

\begin{defn}[Cofinality]
	Let $\alpha$ and $\beta$ be ordinals.  A function $f: \alpha \rightarrow \beta$ maps $\alpha$ \textbf{cofinally} to $\beta$ if for all $b \in \beta$ there exists some $a \in f(\alpha)$ such that $b \leq a$.  The \textbf{cofinality} of $\beta$, denoted $\text{cf}(\beta)$, is the least ordinal $\alpha$ for which there exists a function that maps $\alpha$ cofinally into $\beta$.  An ordinal $\alpha$ is \textbf{regular} iff $\text{cf}(\alpha) = \alpha$.
\end{defn}
\section[THE BALANCED ERD\H{O}S-RADO PARTITION RELATION]{The Balanced Erd\H{o}s-Rado Partition Relation}
% \mysection{The Balanced Erd\H{o}s-Rado Partition Relation}
\subsection{Outline of the Theorem}
We now introduce the first partition relation we wish to discuss and use to lay the foundation for questions regarding the other two partition relations.

% Theorem 2.9 in Hajnal-Larson with r = 2

\begin{thm}[Balanced Erd\H{o}s-Rado Partition Relation]
	\label{bpair}
	For all infinite cardinals $\kappa$ and for all nonempty ordinals $\gamma < \text{cf}(\kappa)$,
    \begin{align*}
    	\left(2^{<\kappa}\right)^{+} \rightarrow (\kappa + 1)^{2}_{\gamma}.
    \end{align*}
\end{thm}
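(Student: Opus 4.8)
The plan is to establish this by the ramification (tree) argument. Write $\lambda = 2^{<\kappa}$, fix a set $A$ with $\text{ot}(A) = \lambda^{+}$ --- without loss of generality $A = \lambda^{+}$ --- and an arbitrary $2$-partition $\chi: [\lambda^{+}]^{2} \rightarrow \gamma$. I would build a tree $T$ whose nodes are the sequences $t: \alpha \rightarrow \gamma$ with $\alpha < \kappa$, attaching to each node a set $A_{t} \subseteq \lambda^{+}$ of ``followers'': put $A_{\langle \rangle} = \lambda^{+}$; whenever $A_{t} \neq \varnothing$ and $\text{dom}(t) = \alpha$, let the \emph{representative} of $t$ be $a_{t} = \min A_{t}$ and give $t$ one child for each $i < \gamma$, the $i$-th child carrying the follower-set $\{ x \in A_{t} \setminus \{ a_{t} \} : \chi(\{ a_{t}, x \}) = i \}$; at a limit level $\alpha$ put $A_{t} = \bigcap_{\beta < \alpha} A_{t \restriction \beta}$. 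Each $x \in \lambda^{+}$ traces a branch: at level $\alpha$ it lies in some follower-set $A_{t}$ with $\text{dom}(t) = \alpha$, and either $x = a_{t}$ --- so $x$ is ``used up'' as that node's representative and its branch stops --- or $x$ moves to the child indexed by $\chi(\{ a_{t}, x \})$. The decisive point is that $x$ is never lost at a limit level, since it already lies in the relevant intersection; so each $x$ is the representative of at most one node.

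The next step is the bookkeeping. A node at level $\alpha$ is a function $\alpha \rightarrow \gamma$, and both $|\alpha|$ and $\gamma$ lie below $\kappa$ (the latter because $\gamma < \text{cf}(\kappa) \le \kappa$), so each of the fewer than $\kappa$ levels carries at most $2^{<\kappa} = \lambda$ nodes, whence $T$ has at most $\lambda$ nodes and at most $\lambda$ representatives in total. Since $\lambda^{+} > \lambda$, some $b \in \lambda^{+}$ is never a representative, so its branch runs through all levels $\alpha < \kappa$. Write $a_{\alpha}$ for the representative met at level $\alpha$ along $b$'s branch and set $c_{\alpha} = \chi(\{ a_{\alpha}, b \})$. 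A routine check then shows that $\langle a_{\alpha} : \alpha < \kappa \rangle$ is strictly increasing with every $a_{\alpha} < b$, and that $\chi(\{ a_{\alpha}, a_{\beta} \}) = c_{\alpha}$ whenever $\alpha < \beta < \kappa$ --- because $a_{\beta}$, like $b$, passes through the $c_{\alpha}$-th child of the node $b$ occupies at level $\alpha$. Hence $\{ a_{\alpha} : \alpha < \kappa \} \cup \{ b \}$ has order type $\kappa + 1$ and is \emph{end-homogeneous}: the color of any pair from it depends only on the index of its smaller member.

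The remaining task is to replace the varying end-colors $c_{\alpha}$ by a single color. The map $\alpha \mapsto c_{\alpha}$ sends $\kappa$ into $\gamma$, and here the hypothesis $\gamma < \text{cf}(\kappa)$ does its second job: some fiber $X = \{ \alpha < \kappa : c_{\alpha} = i^{*} \}$ has order type $\kappa$, and then $\{ a_{\alpha} : \alpha \in X \} \cup \{ b \}$ --- re-enumerated in increasing order, with $b$ still on top --- is a $\chi$-homogeneous subset of $A$ of order type $\kappa + 1$ in color $i^{*}$, exactly as the partition relation demands. I expect this fiber claim to be the one step requiring genuine care. When $\kappa$ is regular it is immediate, since an unbounded subset of $\kappa$ has order type $\kappa$; but when $\kappa$ is singular a fiber may be unbounded and still have order type as small as $\text{cf}(\kappa)$, so I would fix a cofinal increasing sequence $\langle \kappa_{\xi} : \xi < \text{cf}(\kappa) \rangle$ of successor cardinals below $\kappa$ (with $\kappa_{0} > \gamma$) and argue that on each interval $[\kappa_{\xi}, \kappa_{\xi+1})$ some color is taken $\kappa_{\xi+1}$ times (as $\kappa_{\xi+1}$ is regular and $\gamma < \kappa_{\xi+1}$), and then, since $\gamma < \text{cf}(\kappa)$ and $\text{cf}(\kappa)$ is regular, that one color $i^{*}$ does so on cofinally many intervals, so that its fiber accumulates order type $\kappa_{\xi+1}$ on each of these and therefore has order type $\sup_{\xi} \kappa_{\xi+1} = \kappa$. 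Apart from this lemma, the points to watch are the limit-level behavior of the tree and the estimate bounding the number of nodes by $2^{<\kappa}$; it is worth noting that both that estimate and the fiber lemma are precisely where the hypothesis $\gamma < \text{cf}(\kappa)$ gets used.
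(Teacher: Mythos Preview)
Your proposal is correct and follows essentially the same ramification argument the paper sketches (via the Stepping-Up Lemma from Hajnal--Larson): build a tree of height $\kappa$ with at most $2^{<\kappa}$ nodes, locate an element whose branch runs through every level to obtain an end-homogeneous set of order type $\kappa+1$, then thin to a single color using $\gamma < \text{cf}(\kappa)$. Two minor remarks: the tree has $\kappa$ levels rather than ``fewer than $\kappa$,'' though the node count $\kappa \cdot 2^{<\kappa} = 2^{<\kappa}$ still goes through since $2^{<\kappa} \ge \kappa$, and your singular-case fiber argument, while correct, is more elaborate than needed --- a straight cardinality count (using $\gamma < \text{cf}(\kappa)$ to bound $\sup_{i<\gamma}\mu_i < \kappa$) already gives a fiber of size $\kappa$, hence, as a subset of the cardinal $\kappa$, of order type $\kappa$.
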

\begin{proof}
	Refer to \cite{hajnal+larson} for the full proof.
\end{proof}

Although a full proof is beyond the scope of this work, we will provide a proof sketch to motivate our questions.  The proof of this theorem comes from the so-called Stepping-Up Theorem which uses a ramification argument by building a tree from transfinite sequences defined according to a coloring $\chi: (2^{<\kappa})^{+} \rightarrow \gamma$ and guaranteeing the tree has a branch of the required order type that is homogeneous with respect to the coloring.  In particular, it builds sequences of all order types less than $\kappa$ so that the elements after the first one form ``endhomogeneous'' sets with the first element.  When the tree is built, the tree up to the $\kappa$ level has cardinality no greater than $2^{\kappa}$ and every element at level $\kappa$ can be unioned with some branch of order type $\kappa$ to produce an ``endhomogenous'' set of order type $\kappa + 1$.  From this there must exist a homogeneous set of order type $\kappa + 1$.
\par
From the balanced Erd\H{o}s-Rado partition relation we may trivially deduce the following corollary.

\begin{cor}
	For all finite ordinals $n > 0$,
    \begin{align*}
    	\omega_{1} \rightarrow (\omega + 1)^{2}_{n}.
    \end{align*}
\end{cor}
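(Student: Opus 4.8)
The plan is to derive the corollary as a direct instantiation of the balanced Erd\H{o}s-Rado partition relation (Theorem~\ref{bpair}) with the cardinal $\kappa$ chosen to be $\omega$. First I would verify that $\omega$ satisfies the hypotheses: it is an infinite cardinal, and since $\omega$ is regular we have $\text{cf}(\omega) = \omega$, so any finite ordinal $n > 0$ satisfies $n < \omega = \text{cf}(\omega)$ and is a nonempty ordinal. Thus the theorem applies with $\kappa = \omega$ and $\gamma = n$.

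Next I would compute the resource $\left(2^{<\kappa}\right)^{+}$ when $\kappa = \omega$. The quantity $2^{<\omega}$ is $\sup_{m < \omega} 2^{m}$, which is a supremum of finite cardinals and hence equals $\omega$ (equivalently, $2^{<\omega} = \left|\bigcup_{m<\omega} {}^{m}2\right| = \aleph_0$ since it is a countable union of finite sets). Then $\left(2^{<\omega}\right)^{+} = \omega^{+} = \omega_{1}$, using the identification $\omega^{+} = \omega_1$ established in the discussion following the definition of $\omega_1$. The goal $\kappa + 1$ becomes $\omega + 1$.

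Substituting these values into Theorem~\ref{bpair} yields exactly $\omega_{1} \rightarrow (\omega + 1)^{2}_{n}$ for each finite ordinal $n > 0$, which is the claim. The only genuine content is the arithmetic $2^{<\omega} = \omega$ together with $\omega^{+} = \omega_1$; everything else is a matter of checking that the side conditions of the theorem are met. I do not anticipate a real obstacle here — this is why the text calls it a trivial deduction — but if one wished to be careful, the step deserving the most attention is confirming that $2^{<\kappa}$ unpacks as a supremum over cardinal exponentials $2^{\lambda}$ for $\lambda < \kappa$ (a suppressed definition in the excerpt) and that for $\kappa = \omega$ this supremum collapses to $\omega$ rather than to something larger.
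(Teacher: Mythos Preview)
Your proposal is correct and follows essentially the same route as the paper: instantiate Theorem~\ref{bpair} with $\kappa = \omega$, compute $2^{<\omega} = \omega$ (the paper phrases this as $\left|\bigcup_{n\in\omega} 2^{n}\right|$ being a countable union of finite sets, while you take the supremum of the cardinals $2^{m}$), conclude $(2^{<\omega})^{+} = \omega^{+} = \omega_{1}$, and note $\text{cf}(\omega) = \omega$ so that any finite $n > 0$ is an admissible $\gamma$.
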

\begin{proof}
	Consider that $\left(2^{<\omega}\right)^{+} = \left|\bigcup_{n \in \omega} 2^{n}\right|^{+}$, but a countable union of finite ordinals is $\omega$, $\omega$ is a cardinal, and $\omega^{+} = \omega_{1}$ since every countable ordinal can be placed into bijection with $\omega$ and will thus have the same cardinality as $\omega$.  Consider next that $\text{cf}(\omega) = \omega$, so if $\gamma \in \omega$ then $\gamma$ is a finite ordinal.
\end{proof}

We shall focus our attention on this form of the theorem.  To make comparisons with the unbalanced partition relations later in the paper more explicit we will rewrite the relation as $\omega_{1} \rightarrow (\omega + 1, (\omega + 1)_{n})^{2}$.
\par
Hajnal and Larson originally present the Erd\H{o}s-Rado theorem in even greater generality than Theorem \ref{bpair}.  By taking successive power sets of the cardinal successor of an infinite cardinal they can prove equivalent theorems for successive goals and successive exponents.  Consequently there is a triple relation analogue of Theorem \ref{bpair}.

\begin{cor}
	\label{btriple}
	For all finite ordinals $n > 0$,
    \begin{align*}
    	2^{\omega_{1}} \rightarrow (\omega + 2)^{3}_{n}.
    \end{align*}
\end{cor}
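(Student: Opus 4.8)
The plan is to read this off the generalized Erd\H{o}s--Rado theorem of \cite{hajnal+larson}, one level higher in the exponent than Theorem \ref{bpair}. Recall from the discussion following Theorem \ref{bpair} that Hajnal and Larson iterate the ramification (Stepping-Up) argument so that each additional power set taken on the resource raises the exponent by one and the goal by one, while leaving the color bound $\gamma < \text{cf}(\kappa)$ in place. Specialized to $\kappa = \omega$, this hierarchy reads: at exponent $2$ the resource is $(2^{<\omega})^+$ with goal $\omega + 1$; at exponent $3$ the resource is one power set higher, $2^{(2^{<\omega})^+}$, with goal $(\omega + 1) + 1 = \omega + 2$; and so on up the tower.

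First I would reuse the cardinal arithmetic from the preceding corollary: $2^{<\omega} = \left|\bigcup_{n \in \omega} 2^n\right| = \omega$, so $(2^{<\omega})^+ = \omega^+ = \omega_1$, whence the exponent-$3$ resource $2^{(2^{<\omega})^+}$ is exactly $2^{\omega_1}$. Next I would check the side condition: the colors must satisfy $\gamma < \text{cf}(\omega) = \omega$, that is, $\gamma$ must be finite, so $\gamma = n$ with $0 < n < \omega$ is permitted. Reading the generalized theorem at this stage then delivers $2^{\omega_1} \rightarrow (\omega + 2)^3_n$ directly. Equivalently, one may present the whole argument as a single application of the Stepping-Up Theorem to the pair relation $\omega_1 \rightarrow (\omega + 1)^2_n$ of the preceding corollary.

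The one point demanding care --- hence the main obstacle, modest as it is --- is confirming that the balanced, ordinal-goal form of the step behaves exactly as advertised on these parameters: that the new resource is the power set $2^{\omega_1}$ itself and not its successor $(2^{\omega_1})^+$, that the goal advances cleanly from $\omega + 1$ to $\omega + 2$ and not to some larger ordinal such as $\omega \cdot 2$, and that the hypothesis $\gamma < \text{cf}(\kappa)$ continues to admit every finite color count after the step. Verifying this amounts to tracing the ramification construction sketched for Theorem \ref{bpair} one level higher --- building the tree from transfinite sequences that are now ``endhomogeneous'' for triples rather than for pairs, so that a node at the top level may be unioned with a branch to produce an ``endhomogeneous'' set of order type $\omega + 2$ --- a computation \cite{hajnal+larson} carries out in full and that we are entitled to invoke here.
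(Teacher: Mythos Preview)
Your proposal is correct and matches the paper's approach: the paper offers no standalone proof of this corollary, presenting it instead as the triple-relation instance of the general Erd\H{o}s--Rado hierarchy from \cite{hajnal+larson} obtained by ``taking successive power sets'' to step up the exponent and goal. Your write-up simply makes explicit the parameter check (that $(2^{<\omega})^{+}=\omega_{1}$, so the next-level resource is $2^{\omega_{1}}$, with goal $\omega+2$ and finite color bound) that the paper leaves implicit.
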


\subsection{Analysis of the Theorem}
Certain questions naturally arise about the goals in both the original general form and in our instance taken by setting $\kappa = \omega$.  From $\omega_{1}$ elements the ramification argument can construct a tree with branches that reach up to level $\omega$ in such a way that some branch is guaranteed to be homogeneous in any of finitely many colors.  Can we increase all the goals to $\omega + m$ for an arbitrary finite ordinal $m$?  As a matter of fact, by increasing the resource and the exponent we can increase every goal to $\omega + r - 1$.  Is there a way to do this without increasing the resource?  Can we ``breach the limit'' and increase the zeroth goal to $\omega + \omega$?  We can also consider what might happen if we release the restriction on having $b_{i} = b_{j}$ for all $i,j < n$; that is, if we ``unbalance'' the partition relation.  Can we increase the zeroth goal to $\omega + m$ for an arbitrary finite ordinal $m$ or to $\omega + \omega$?  Can we even ``reach the resource'' and increase the zeroth goal to $\omega_{1}$?
\section[THE UNBALANCED ERD\H{O}S-RADO PARTITION RELATION]{The Unbalanced Erd\H{o}s-Rado Partition Relation}
% \mysection{The Unbalanced Erd\H{o}s-Rado Partition Relation}
\subsection{Outline of the Theorem}
In response to some of the questions from the previous section, especially with regard to unbalancing the relation, we introduce the Unbalanced Erd\H{o}s-Rado Partition Relation.

\begin{thm}[Unbalanced Erd\H{o}s-Rado Partition Relation]
	For all infinite cardinals $\kappa$ and for all nonempty ordinals $\gamma < \text{cf}(\kappa)$,
    \begin{align*}
    	\left(2^{<\kappa}\right)^{+} \rightarrow \left(\left(2^{<\kappa}\right)^{+}, (\text{cf}(\kappa) + 1)_{\gamma}\right)^{2}.
    \end{align*}
\end{thm}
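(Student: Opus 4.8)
The plan is a case split on how prevalent the distinguished color $0$ is: one case reduces to the balanced relation (Theorem~\ref{bpair}), the other is handled by an Erd\H{o}s--Rado-style ramification of height $\mathrm{cf}(\kappa)$. Write $\lambda=(2^{<\kappa})^{+}$ and $\mu=\mathrm{cf}(\kappa)$, fix a coloring $\chi\colon[\lambda]^{2}\to\{0\}\cup I$ with $|I|=\gamma$, and call a set $A\subseteq\lambda$ of size $\lambda$ \emph{$0$-thin} if for every $x\in A$ the set $\{z\in A:z>x,\ \chi(\{x,z\})=0\}$ has size $<\lambda$.

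First I would dispose of the case that some $\lambda$-sized $A\subseteq\lambda$ is $0$-thin. In that case I build an increasing sequence $\langle y_{\xi}:\xi<\lambda\rangle$ inside $A$ on which $\chi$ never takes the value $0$: at stage $\xi$ the forbidden points --- those $0$-joined above to some $y_{\eta}$, $\eta<\xi$, together with the points $\le\sup_{\eta<\xi}y_{\eta}$ --- form a union of fewer than $\lambda$ sets each of size $<\lambda$, hence a set of size $<\lambda$, so regularity of $\lambda$ leaves a legal choice for $y_{\xi}$. On $G=\{y_{\xi}:\xi<\lambda\}$, which has order type $\lambda=(2^{<\kappa})^{+}$, the restriction $\chi\restriction[G]^{2}$ uses only the $\gamma<\mathrm{cf}(\kappa)$ colors of $I$, so Theorem~\ref{bpair} produces a set homogeneous in some $i\in I$ of order type $\kappa+1$, which contains one of order type $\mu+1$ since $\mu=\mathrm{cf}(\kappa)\le\kappa$; this is the second goal.

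So assume no $\lambda$-sized subset of $\lambda$ is $0$-thin --- equivalently, every $\lambda$-sized set contains a point with $\lambda$ many $0$-neighbors above it. Now I would ramify. Build a tree whose nodes carry reservoirs of size $\lambda$, with root reservoir $\lambda$. At a node $\sigma$ of level $<\mu$ one first runs an auxiliary $0$-chain inside its reservoir: repeatedly pass from the current $\lambda$-sized reservoir $R^{\sigma}_{\gamma}$ to the $\lambda$-sized set $R^{\sigma}_{\gamma+1}$ of upward $0$-neighbors of a point $x^{\sigma}_{\gamma}$ that is ``$0$-dense above'' there (such a point exists by the case hypothesis), taking intersections at limits. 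If some $0$-chain runs for $\lambda$ steps, its labels form a $0$-homogeneous set of order type $\lambda$ and we are done with the zeroth goal; otherwise every $0$-chain dies at a limit stage $\beta_{\sigma}<\lambda$ where the intersection-reservoir has size $<\lambda$, so $\lambda$ many points of $R^{\sigma}_{0}$ are $0$-split, above, from one of the labels $x^{\sigma}_{\gamma}$ ($\gamma<\beta_{\sigma}$). Let $\sigma$ branch to level $+1$ over all pairs $(\gamma,i)$ with $\gamma<\beta_{\sigma}$ and $i\in I$, the child $(\gamma,i)$ carrying the set of upward $i$-neighbors of $x^{\sigma}_{\gamma}$ inside $R^{\sigma}_{\gamma}$, and keep the children whose reservoir still has size $\lambda$. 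The crucial point is the size bookkeeping: once all $0$-chains are short, every set produced has size $<\lambda$, hence $\le2^{<\kappa}$ (as $\lambda=(2^{<\kappa})^{+}$), the branching at each node is at most $2^{<\kappa}\cdot\gamma=2^{<\kappa}$, and the whole tree up to level $\mu$ has at most $\sum_{\xi<\mu}(2^{<\kappa})^{\xi}=2^{<\kappa}<\lambda$ nodes; so fewer than $\lambda$ points of $\lambda$ ever get ``stopped'' and some $\delta^{*}\in\lambda$ flows all the way down to level $\mu$. Reading off, along $\delta^{*}$'s branch, the label $d_{\xi}:=x^{\sigma_{\xi}}_{\gamma^{*}_{\xi}}$ at which $\delta^{*}$ leaves the $0$-chain at each level $\xi<\mu$, together with $\delta^{*}$ on top, gives an increasing end-homogeneous set of order type $\mu+1$ whose end-colors $c_{\xi}=\chi(\{d_{\xi},\delta^{*}\})$ all lie in $I$; pigeonholing the $c_{\xi}$ against the $\gamma<\mathrm{cf}(\kappa)=\mathrm{cf}(\mathrm{cf}(\kappa))$ colors over the regular ordinal $\mu$ picks out some $i^{*}\in I$ attained on a cofinal --- hence order type $\mu$ --- set $S\subseteq\mu$, and $\{d_{\xi}:\xi\in S\}\cup\{\delta^{*}\}$ is homogeneous in $i^{*}$ of order type $\mu+1=\mathrm{cf}(\kappa)+1$.

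The step I expect to be the main obstacle is exactly this bookkeeping in the ramification case --- checking that absorbing the $0$-chains into the nodes really does keep every auxiliary set of size $<\lambda$ and the tree of size $\le2^{<\kappa}$, so that the set of stopped points (labels, points caught in a dead intersection, points routed to discarded children, points below a chosen label, and points caught at discarded limit nodes) stays below $\lambda$. This rests on the cardinal-arithmetic identity $(2^{<\kappa})^{\xi}=2^{<\kappa}$ for $\xi<\mathrm{cf}(\kappa)$, proved from $\mathrm{cf}(2^{<\kappa})\ge\mathrm{cf}(\kappa)$ by a short König-inequality argument, and on the trivial but load-bearing fact that every ordinal below $(2^{<\kappa})^{+}$ has cardinality at most $2^{<\kappa}$. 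The one place where singular $\kappa$ needs separate attention is precisely here, since there $\mathrm{cf}(2^{<\kappa})=\mathrm{cf}(\kappa)$ and $(2^{<\kappa})^{\mathrm{cf}(\kappa)}>2^{<\kappa}$, so one must be careful never to need reservoir-intersections or branchings of length $\mathrm{cf}(\kappa)$ within a single level, which is why the height of the ramification is exactly $\mathrm{cf}(\kappa)$ and no more.
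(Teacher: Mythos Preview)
Your argument is correct, and the cardinal-arithmetic bookkeeping (in particular the identity $(2^{<\kappa})^{\xi}=2^{<\kappa}$ for $\xi<\mathrm{cf}(\kappa)$, obtained from $\mathrm{cf}(2^{<\kappa})\ge\mathrm{cf}(\kappa)$) goes through just as you say, so the ramification tree really does stay of size at most $2^{<\kappa}$ and some $\delta^{*}$ survives to height $\mathrm{cf}(\kappa)$. However, the route is genuinely different from the paper's. The paper (following Hajnal and Larson) does not ramify at all: it fixes a suitable elementary submodel $N\prec H(\lambda^{++})$ with $\chi\in N$, takes the critical ordinal $\alpha=N\cap\lambda^{+}$, and builds the non-reflecting ideal $\mathcal{I}$ and its dual filter $\mathcal{F}$ on $\alpha$; the case split is on whether some nonzero color class $\chi(\alpha;\eta)\cap\alpha$ reflects the properties of $\alpha$. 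If one does, a maximality argument exploiting reflection forces a $\eta$-homogeneous set of size at least $\mathrm{cf}(\kappa)$ below $\alpha$, which together with $\alpha$ gives order type $\mathrm{cf}(\kappa)+1$; if none does, the color-$0$ class is large in the ideal sense and one extracts a $0$-homogeneous set that is stationary in $\lambda^{+}$. Your approach is more elementary---no model theory, no ideals---and it buys a pleasant economy by recycling Theorem~\ref{bpair} in the $0$-thin case rather than redoing that work. What the paper's submodel method buys in return is a single uniform machine (the pair $\mathcal{I},\mathcal{F}$) that localizes everything at one ordinal $\alpha$, avoids the tree construction entirely, and yields the stronger conclusion that the $0$-homogeneous set is stationary rather than merely of order type $\lambda^{+}$.
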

\begin{proof}
	Refer to \cite{hajnal+larson} for the full proof.
\end{proof}

For the sake of legibility throughout the section and keeping the notation from ``Partition Relations,'' let $\lambda = 2^{<\kappa}$.  The substituted partition relation then reads $\lambda^{+} \rightarrow (\lambda^{+}, (\text{cf}(\kappa) + 1)_{\gamma})^{2}$.

\subsubsection{Proof Sketch Preliminaries}
Before we can provide a proof sketch we will need to introduce some background from model theory, much of which can be found in Chapter IV of \cite{kunen}.  We assume the reader is familiar with structures.

\begin{defn}
	A structure is a \textbf{model of a theory} iff it satisfies every axiom of that theory.  In particular, a structure is a model of Zermelo-Frankel Set Theory with Axiom of Choice (ZFC) iff it satisfies every axiom of ZFC.
\end{defn}

\begin{defn}
	For any two structures $\mathcal{A}$ and $\mathcal{B}$, $\mathcal{A}$ is an \textbf{elementary substructure} of $\mathcal{B}$ iff $\mathcal{A}$ is a substructure of $\mathcal{B}$ and for any well formed formula $\phi(\vec{x})$ with $n$ free variables and for any $n$-tuple $\vec{a}$ of elements from the domain of $\mathcal{A}$, $\mathcal{A} \vDash_{\vec{x}/\vec{a}} \phi(\vec{x})$ iff $\mathcal{B} \vDash_{\vec{x}/\vec{a}} \phi(\vec{x})$.
\end{defn}

\begin{defn}
	For any set $A$, consider the following recursive definition:
    \begin{align*}
    	\bigcup^{0}A &= A \\
        \bigcup^{\alpha + 1}A &= \bigcup\bigcup^{\alpha}A && \text{$\alpha$ is an ordinal.}
    \end{align*}
    We define the \textbf{transitive closure} of A, denoted $\text{trcl}(A)$, as $\bigcup_{n < \omega}\left\{\bigcup^{n}A\right\}$.
\end{defn}

We provide a recursive definition, which only defines the base case at $0$ and the successor ordinal case, rather than a transfinitely recursive definition, which additionally defines the limit ordinal case, because the Axiom of Foundation guarantees that there are no infinite descending sequences of sets with respect to element inclusion.

\begin{defn}
	The \textbf{hereditary cardinality} of a set $A$ is the cardinality of its transitive closure.  For any infinite cardinal $\kappa$, let $H(\kappa) = \{x \mid |\text{trcl}(x)| < \kappa\}$.  We say that $H(\kappa)$ is the collection of all sets of hereditary cardinality less than $\kappa$.
\end{defn}

When $\kappa$ is a regular cardinal such that $\kappa > \omega$, $H(\kappa)$ is a model of ZFC without the Axiom of Power Set (henceforth, ZFC-P).  When furthermore the cardinality of the power set of every ordinal in $\kappa$ is less than $\kappa$, $H(\kappa)$ is a model of ZFC.  Kunen proves this in \cite{kunen}.  In particular, $H(\lambda^{++})$ is a model of ZFC-P.
\par
Aside from its significance as a model of ZFC-P, every set and every transitive closure of every set in $H(\lambda^{++})$ has cardinality no greater than $\lambda^{+}$, the resource of the unbalanced Erd\H{o}s-Rado partition relation. When they consider $H(\lambda^{++})$ to house their work, Hajnal and Larson consider a ``suitable'' elementary submodel $N$ such that, among other properties, its intersection with $\lambda^{+}$ is an ordinal.  They refer to this unique ordinal as the ``critical ordinal'' $\alpha$ of $N$.

\begin{defn}
	For any set $X$, a family $\mathcal{I} \subset \mathcal{P}(X)$ is an \textbf{ideal on $X$} iff the following properties hold:
    \begin{enumerate}
    \item $\varnothing \in \mathcal{I}$ and $X \notin \mathcal{I}$
    \item $\mathcal{I}$ is closed under subsets
    \item $\mathcal{I}$ is closed under finite unions
    \end{enumerate}
\end{defn}

\begin{defn}
	For any set $X$, a family $\mathcal{F} \subset \mathcal{P}(X)$ is a \textbf{filter on $X$} iff the following properties hold:
    \begin{enumerate}
    \item $\varnothing \notin \mathcal{F}$ and $X \in \mathcal{F}$
    \item $\mathcal{F}$ is closed under supersets
    \item $\mathcal{F}$ is closed under finite intersections
    \end{enumerate}
\end{defn}

In a sense, ideals of $X$ are a collection of ``small'' subsets of $X$ while filters of $X$ are a collection of ``big'' subsets of $X$.  The empty set is ``obviously'' small, the full set $X$ is ``obviously'' big, if a set is small then any subset of it must also be small, and if a set is big then any superset of it must also be big.  Ideals further impose that small sets are small enough that even finite unions won't be enough to create big sets and filters further impose that big sets are big enough that even finite intersections won't be enough to create small sets.

\begin{defn}
	For any limit ordinal $\lambda$, a subset $C \subseteq \lambda$ is \textbf{closed in $\lambda$} iff for all $\alpha \in \lambda$ if $\sup(C \cap \alpha) = \alpha$ then $\alpha \in C$.  A subset $C \subseteq \lambda$ is \textbf{unbounded in $\lambda$} iff for all $\alpha \in \lambda$ there exists some $\beta \in C$ such that $\alpha < \beta$.  A subset $C \subseteq \lambda$ is a \textbf{club in $\lambda$} iff $C$ is closed and unbounded in $\lambda$.
\end{defn}

\begin{defn}
	For any limit ordinal $\lambda$, a subset $S \subseteq \lambda$ is \textbf{stationary in $\lambda$} iff for all clubs $C$ in $\lambda$, $S \cap C \neq \varnothing$.
\end{defn}

For some critical ordinal $\alpha$ of some suitable elementary submodel $N \subseteq H(\lambda^{++})$, Hajnal and Larson construct an ideal $\mathcal{I}$ on $\alpha$ and a filter $\mathcal{F}$ on $\lambda^{+}$ such that every element of the filter is an element of $N$ stationary in $\lambda^{+}$ and for every element of the ideal the cardinality of its intersection with some element of the filter is less than $\kappa$.

\begin{defn}
	For $\alpha$ the critical ordinal of some suitable elementary submodel $N \subseteq H(\lambda^{++})$, some subset $X \subseteq \alpha$ \textbf{reflects the properties of $\alpha$} iff $X \cap F \neq \varnothing$ for all $F \in \mathcal{F}$.
\end{defn}

Since $X$ reflects the properties of $\alpha$ if $X \notin \mathcal{I}$, the ideal $\mathcal{I}$ can be referred to as the  non-reflecting ideal on $\alpha$.  Reflection will be important in the proof to find an element that can be added to some homogeneous set that will keep the set homogeneous.

\subsubsection{Proof Sketch}
To prove the unbalanced Erd\H{o}s-Rado partition relation Hajnal and Larson first consider some 2-coloring $\chi$ of $\lambda^{++}$ in $\gamma$ colors a suitable elementary submodel $N \subseteq H(\lambda^{++})$ such that $\chi \in N$.  From the submodel $N$ they consider the critical ordinal $\alpha$ and build the non-reflecting ideal $\mathcal{I}$.  For convenience, let us follow them in defining $\chi(\alpha; \eta) = \{\beta < \alpha \mid \chi(\{\alpha, \beta\}) = \eta\}$.
\par
If there exists some nonzero color $\eta \in \gamma$ such that $\chi(\alpha; \eta) \cap \alpha$ reflects the properties of $\alpha$, then there must be some homogeneous subset of that reflecting set in color $\eta$ and thus a maximal such subset $Z$.  If $|Z| < \text{cf}(\kappa)$, then $|Z| < \text{cf}(\alpha)$ by suitability of $N$.  This implies $Z$, let alone its intersection with $\alpha$, is not cofinal in $\alpha$, so $\sup(Z\cap \alpha) < \alpha$ and thus by suitability of $N$, $Z \cap \alpha \in N$.  The intersection $A = \bigcap\{\chi(z; \eta) \mid z \in Z \cap \alpha\}$, the set of all ordinals less than every element of $Z \cap \alpha$ whose pairing with any element of $Z \cap \alpha$ will have color $\eta$, will be an element of $N$ that contains $\alpha$.  Since it is in $N$ and $\alpha$ is in it, the set is an element of $\mathcal{F}$.  Since $\chi(\alpha; \eta) \cap \alpha$ reflects the properties of $\alpha$, so does $\chi(\alpha; \eta) \cap \alpha - \sup(Z \cap \alpha)$.  Since that reflects, its intersection with $A$ is nonempty and so there is an ordinal not already in $\sup(Z \cap \alpha)$ such that when paired with any element in $Z$ will have color $\eta$.  By adding that element to $Z$ a homogeneous subset greater than the maximal homogeneous subset can be constructed.  This contradicts the maximality of $Z$, so $|Z| \geq \text{cf}(\kappa)$.
\par
If there does not exist some nonzero color $\eta \in \gamma$ such that $\chi(\alpha; \eta) \cap \alpha$ reflects the properties of $\alpha$, then $\alpha - \chi(\alpha; 0)$, a subset of the union of $\gamma$ many elements of $\mathcal{I}$, must be in $\mathcal{I}$ since $\mathcal{I}$ is closed under unions of less than $\text{cf}(\kappa)$ many elements.  By the construction of $\mathcal{I}$, there is some $Z \in N$ such that $Z \subseteq \lambda^{+}$, $\alpha \in Z$, and $|Z - \chi(\alpha; 0)| < \kappa$.  The set $W = \{\beta \in Z \mid |Z - \chi(\beta; 0)| < \kappa\}$ is an element of $N$ and contains $\alpha$, so $W \in \mathcal{F}$, but since every element of the filter is stationary in $\lambda^{+}$, thus $W$ is stationary in $\lambda^{+}$.  It can then be proven that there is a stationary subset of $W$ that is homogeneous in color 0 with respect to $\chi$.
\par
From this we may trivially deduce the following corollary.

\begin{cor}
	For all finite ordinals $n > 0$,
    \begin{align*}
    	\omega_{1} \rightarrow (\omega_{1}, (\omega + 1)_{n})^{2}.
    \end{align*}
\end{cor}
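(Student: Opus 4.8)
The plan is to obtain this corollary as a direct specialization of the Unbalanced Erd\H{o}s-Rado Partition Relation by setting $\kappa = \omega$, mirroring the way the first corollary to Theorem~\ref{bpair} specialized the balanced relation. First I would verify the relevant cardinal arithmetic: $2^{<\omega} = \left|\bigcup_{n \in \omega} 2^{n}\right|$ is a countable union of finite ordinals and hence has cardinality $\omega$, which is itself a cardinal, so $\left(2^{<\omega}\right)^{+} = \omega^{+} = \omega_{1}$. This is precisely the computation already carried out in the proof of the corollary to the balanced relation, so no new work is needed here; it fixes the resource of the specialized relation as $\omega_{1}$ and also fixes the zeroth goal $\left(2^{<\kappa}\right)^{+}$ as $\omega_{1}$.

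Next I would check the side condition and the remaining goals. Since $\text{cf}(\omega) = \omega$, the hypothesis ``$\gamma < \text{cf}(\kappa)$'' becomes ``$\gamma < \omega$,'' so $\gamma$ may be taken to be any finite ordinal; given a finite $n > 0$ we set $\gamma = n$, which is nonempty as required. The same identity $\text{cf}(\omega) = \omega$ turns each of the $\gamma$ remaining goals $\text{cf}(\kappa) + 1$ into $\omega + 1$. With these substitutions the conclusion $\left(2^{<\kappa}\right)^{+} \rightarrow \left(\left(2^{<\kappa}\right)^{+}, (\text{cf}(\kappa)+1)_{\gamma}\right)^{2}$ reads exactly $\omega_{1} \rightarrow (\omega_{1}, (\omega + 1)_{n})^{2}$.

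I do not expect a genuine obstacle here: the corollary is a pure instantiation of the theorem, and the only point requiring any care is confirming that a finite number of colors is admissible, which holds because $\omega$ is regular and so its cofinality is $\omega$ rather than something smaller. All of the ramification and submodel machinery described in the proof sketch is already packaged inside the cited theorem, so nothing from that sketch needs to be invoked directly.
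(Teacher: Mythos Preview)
Your proposal is correct and follows exactly the paper's own argument: specialize the Unbalanced Erd\H{o}s-Rado Partition Relation at $\kappa = \omega$, using $(2^{<\omega})^{+} = \omega_{1}$ and $\text{cf}(\omega) = \omega$ to identify the resource, the zeroth goal, the remaining goals, and the admissible range of colors. The paper's proof is simply a terser statement of the same substitution, invoking the computations already done for the balanced corollary.
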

\begin{proof}
	As in the proof for the balanced relation, $(2^{<\omega})^{+} = \omega_{1}$ and $\text{cf}(\omega) = \omega$.
\end{proof}

\subsection{Analysis of the Theorem}
In response to our questions regarding the balanced form, we can indeed guarantee an uncountable homogeneous subset of $\omega_{1}$ by unbalancing the partition relation.  By unbalancing the relation Hajnal and Larson can consider a case where some set reflects for a nonzero color, where the set in question consists of ordinals that all produce the same color when paired with the critical ordinal, or a case where no set reflects for any nonzero color.  In the case where some set reflects, they take advantage of reflection to prove some homogeneous subset of it must have cardinality no less than $\text{cf}(\kappa)$ and thus show that, when paired with the critical ordinal, has order type exactly $\text{cf}(\kappa) + 1$.  In the case where no such set reflects, they demonstrate the existence of a set containing the critical ordinal that is stationary in $\lambda^{+}$ and thus produce a homogeneous set of order type $\lambda^{+}$.

\section[ALBIN JONES'S TRIPLE PARTITION RELATION]{Albin Jones's Triple Partition Relation}
% \mysection{Albin Jones's Triple Partition Relation}
\subsection{Outline of the Theorem}
After looking at pair partition relations with resource $\omega_{1}$ we shall now introduce a triple partition relation with resource $\omega_{1}$ due to Albin Jones in \cite{jones}.

\begin{thm}
	For all $m,n < \omega$,
    \begin{align*}
    	\omega_{1} \rightarrow (\omega + m, n)^{3}.
    \end{align*}
\end{thm}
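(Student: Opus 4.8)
The plan is to reduce this triple relation to a pair relation by an end‑homogeneity (``stepping‑up'') argument — the same mechanism that underlies the Erd\H{o}s--Rado theorem discussed earlier — and then to feed the resulting pair relation into the classical partition calculus of countable ordinals. Fix a $3$‑partition $\chi$ of $\omega_1$ in colors $\{0,1\}$ and suppose, toward a contradiction, that no subset of order type $n$ is homogeneous in color $1$; the goal is then to produce a subset of order type $\omega+m$ homogeneous in color $0$. Note first that this hypothesis is restrictive: any set that is homogeneous for $\chi$ in the sense of ordinary Ramsey's theorem $\omega \to (\omega)^3_2$ — which applies already inside the initial segment $\omega \subseteq \omega_1$ — must be homogeneous in color $0$, since an infinite color‑$1$ homogeneous set would contain one of order type $n$. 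This disposes of the case $m=0$.

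For $m>0$ the idea is to build a set $Z \subseteq \omega_1$ that is \emph{end‑homogeneous} in the top coordinate: for all $\alpha<\beta$ in $Z$ and all $\gamma,\gamma' \in Z$ with $\beta<\gamma,\gamma'$ one has $\chi(\{\alpha,\beta,\gamma\}) = \chi(\{\alpha,\beta,\gamma'\})$, and to build $Z$ of a prescribed \emph{countable} order type $\tau$. On such a $Z$ the triple coloring collapses to a pair coloring: put $e(\{\alpha,\beta\}) = \chi(\{\alpha,\beta,\gamma\})$ for any $\gamma \in Z$ above $\beta$, so that every triple $\{\alpha,\beta,\gamma\}$ drawn from any $W \subseteq Z$ satisfies $\chi(\{\alpha,\beta,\gamma\}) = e(\{\alpha,\beta\})$. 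I would choose $\tau$ large enough that the pair relation $\tau \to (\omega+m,n)^2$ holds — for suitable countable $\tau$ this is a theorem of the (well‑developed, ZFC) partition calculus of countable ordinals, which may be quoted from the references — and apply it to $e$ restricted to $[Z]^2$. A set homogeneous for $e$ in color $1$ of order type $n$ is then homogeneous for $\chi$ in color $1$ of order type $n$, contradicting our hypothesis; hence there is a $W \subseteq Z$ of order type $\omega+m$ homogeneous for $e$ in color $0$, and by the displayed collapse $W$ is homogeneous for $\chi$ in color $0$ of order type $\omega+m$, as required.

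Thus the whole argument rests on constructing, inside $\omega_1$, an end‑homogeneous set of the chosen countable order type $\tau$, and this is the step I expect to be the real obstacle. The first $\omega$ elements come greedily: having chosen $a_0<\cdots<a_k$ together with an uncountable $C_k \subseteq \omega_1$ on which $\chi(\{a_i,a_j,\gamma\})$ is constant in $\gamma$ for all $i<j\le k$, one sets $a_{k+1}=\min C_k$, splits $C_k$ by the finitely many colorings $\gamma \mapsto \chi(\{a_i,a_{k+1},\gamma\})$, and keeps an uncountable piece. The difficulty is passing the limit: the naive continuation needs a point of $\bigcap_{k<\omega}C_k$, and a decreasing $\omega$‑chain of uncountable — indeed even of stationary — subsets of $\omega_1$ can have empty intersection, so there need not be a single ``ambient'' large set surviving to level $\omega$. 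Overcoming this requires performing the thinning so as to preserve a pre‑selected target ordinal (co‑countably many ordinals are never discarded into a \emph{countable} piece, so such a target exists), and, to reach order types past $\omega\cdot 2$ — which is necessary, since the relevant $\tau$ is a finite power of $\omega$ and so has infinitely many limit points — iterating this device through the limit points of $\tau$, most cleanly inside a suitable elementary submodel in the spirit of the submodel argument for the unbalanced Erd\H{o}s--Rado relation. Carrying out this bookkeeping coherently, and identifying precisely which countable $\tau$ suffices in the pair relation, is where the bulk of the work lies; it is also exactly the point at which the construction breaks down above order type $\omega+m$, which is why the color‑$0$ goal cannot be pushed further by this method.
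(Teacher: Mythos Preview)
Your reduction is sound: an end-homogeneous $Z$ does collapse $\chi$ to a pair coloring $e$ on $[Z]^2$, any $e$-homogeneous subset is $\chi$-homogeneous in the same color, and the countable pair relations $\tau \to (\omega+m,n)^2$ you invoke are genuine ZFC theorems.  You have also correctly located the crux of the matter.  The difficulty is that the step you flag as ``the real obstacle'' is not merely bookkeeping---it is the whole theorem, and your sketch for getting past it does not work.

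The parenthetical justification ``co-countably many ordinals are never discarded into a countable piece, so such a target exists'' is circular.  The partition of $C_k$ at stage $k+1$ depends on $a_{k+1}$, which depends on which piece you kept at stage $k$, which depends on the target $\delta$.  Different targets steer the construction down different branches, producing different pieces; there is no single family of ``discarded'' ordinals to count before $\delta$ is fixed, and once $\delta$ is fixed nothing prevents the piece containing it from being countable at some finite stage.  The elementary-submodel idea from the unbalanced Erd\H{o}s--Rado argument does not transfer either: that proof works because a pair coloring, viewed from the critical ordinal $\alpha$, becomes a \emph{unary} function $\beta \mapsto \chi(\{\beta,\alpha\})$ on $\alpha$, which the non-reflecting ideal can analyze.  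A triple coloring viewed from $\alpha$ is still a \emph{pair} coloring on $\alpha$, and the submodel gives you no leverage on it.  Iterating through the (infinitely many) limit points of the required $\tau$ only compounds the problem.

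This is exactly why Jones's proof looks nothing like a stepping-up argument.  He never builds an end-homogeneous set.  Instead he first works under the extra hypothesis $\mathfrak{p}=\mathfrak{c}>\omega_1$: this supplies a Ramsey ultrafilter and, crucially, pseudo-intersections for $\omega_1$-sized filter bases, which is precisely the tool that lets one survive $\omega_1$ many thinnings coherently.  Under that hypothesis he runs an induction on $n$ organized around a dichotomy between two combinatorial statements $\Phi_k$ and $\Psi_k$, using the ultrafilter to homogenize auxiliary finite-exponent colorings of $\omega$ and the pseudo-intersection property to glue $\omega_1$ many such homogenizations together.  He then eliminates the hypothesis by forcing it to hold and invoking absoluteness of the partition relation between the ground model and the extension.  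No direct ZFC construction of end-homogeneous sets of order type beyond $\omega$ for triple colorings of $\omega_1$ is known; the detour through forcing is, at present, the point of the proof.
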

\begin{proof}
	Refer to \cite{jones} for full proof.
\end{proof}

Albin Jones splits the proof of the relation into three parts:  he proves the relation under the assumption that two cardinals are equal and strictly greater than $\omega_{1}$, then proves that there exists a forcing method to ``expand'' the universe to \emph{force} the assumption to hold, then proves that if it is possible to force the assumption to hold then the conclusion must have been true regardless of the forcing.  The second and third parts of the proof are outside the scope of his paper, merely called upon from others' works as needed, and so it is outside the scope of ours; we shall focus more attention to the first part of the proof and how the assumption he makes permits the partition relation to hold.

\subsubsection{Proof Sketch Preliminaries}

\begin{defn}
	A family of sets $\mathcal{A}$ is a \textbf{filter base} iff any intersection of finitely many of its elements is infinite.
\end{defn}

\begin{defn}
	A set $X$ is a \textbf{psuedo-intersection of a filter base $\mathcal{A} \subseteq [\omega]^{\omega}$} iff for all $A \in \mathcal{A}$, $|X - A| < \omega$.  The cardinal $\mathfrak{p}$ is the least cardinality of a filter base for which there does not exist a pseudo-intersection.
\end{defn}

\begin{defn}
	A filter $\mathcal{F}$ on $[\omega]^{\omega}$ is a \textbf{Ramsey filter} iff for all $m$-partitions $\chi$ of $\omega$ in $n$ colors for all finite ordinals $m$ and $n$ there exists some $F \in \mathcal{F}$ and some $i < n$ such that $\chi(F) = \{i\}$.
\end{defn}

\begin{defn}
	Suppose $x$ and $y$ are subsets of some ordinal.  We say $x << y$ iff every element of $x$ is less than every element of $y$.
\end{defn}

If $\mathfrak{p} = \mathfrak{c}$, where $\mathfrak{c} = |\mathbb{R}| = |2^{\omega}|$ is the cardinality of the reals, then there exist Ramsey filters.  If $\mathfrak{p} > \omega_{1}$ then every filter base $\mathcal{F} \subseteq [\omega]^{\omega}$ such that $|\mathcal{F}| \leq \omega_{1}$ has a pseudo-intersection.  These facts about $\mathfrak{p}$ are why the first part of the proof uses the assumption $\mathfrak{p} = \mathfrak{c} > \omega_{1}$.

\subsubsection{Proof Sketch}
Since $\omega_{1} \rightarrow (\omega + m, n)^{3}$ trivially pertains for $n \leq 3$ Jones fixes arbitrary $m < \omega$ and inducts on $n$.  Let $\chi$ be an arbitrary $3$-partition of $\omega_{1}$ in $2$ colors.  Since there exists an order isomorphism $\phi_{A}$ between $[\omega]^{k}$ under the lexicographic order and any $A \in [\omega_{1}]^{\omega^{k}}$, then for any $k < \omega$, $j < k$, $A \in [\omega_{1}]^{\omega^{k}}$, and $\beta \in \omega_{1}$ such that every element in $A$ is less than $\beta$, we can create a $(2k-j)$-partition $\chi_{k,j,A,\beta}$ of $\omega$ in $2$ colors that maps $x \in [\omega]^{2k-j}$ to $\chi(\{\phi_{A}(a \cup b), \phi_{A}(a \cup c), \beta\})$ given that $x = a \cup b \cup c$, $|a| = j$, $|b| = |c| = k - j$, and $a << b << c$.  This effectively assigns $x,y \in [\omega]^{k}$ to the color of $\{\phi_{A}(x), \phi_{A}(y), \beta\}$ under $\chi$ where the least $j$ elements of $x$ and $y$ are $x \cap y$ and $x \setminus y << y \setminus x$.  Since $\mathfrak{p} = \mathfrak{c}$ there exists a Ramsey filter $\mathcal{F}$ and some $F_{j,k,A,\beta} \in \mathcal{F}$ that is homogeneous with respect to $\chi_{j,k,A,\beta}$.  Consider that the intersection of the $F_{j,k,A,\beta}$ over all $j < k$, which we will denote as $F_{k,A,\beta}$, is a finite intersection of filter elements and thus is also a filter element.  Since $F_{k,A,\beta} \subseteq F_{j,k,A,\beta}$ for all $j < k$, thus it is homogeneous with respect to $\chi_{j,k,A,\beta}$ for all $j < k$.
\par
Jones defines two propositions with respect to $k$.

\begin{defn}[\textbf{$\Phi_{k}$}]
	There exist $j < k$, $A \in [\omega_{1}]^{\omega^{k}}$, and $B \in [\omega_{1}]^{\omega_{1}}$ such that $A << B$ and $\chi_{j,k,A,\beta}([F_{j,k,A,\beta}]^{2k-j}) = 0$ for all $\beta \in B$.
\end{defn}

\begin{defn}[\textbf{$\Psi_{k}$}]
	There exists $A \in [\omega_{1}]^{\omega^{k}}$ such that $\chi(\{\phi_{A}(x), \phi_{A}(y), \phi_{A}(z)\}) = 1$ for all  $x,y,z \in [\omega]^{k}$ such that $x \cap y << x \setminus y << y \setminus x$, $y \cap z << y \setminus z << z \setminus y$, and $|x \cap z| < |x \cap y|$.
\end{defn}

Jones first proves that if $\Phi_{k}$ pertains for some $k < \omega$ then $\omega_{1} \rightarrow (\omega + m, n + 1)^{3}$.  Since $\mathcal{F}$ is a Ramsey filter then for all $\beta, \gamma \in B$ there exists some $F_{k,A,\beta,\gamma} \in \mathcal{F}$ and some $i < 2$ such that $\chi(\{\phi_{A}(x),\beta,\gamma\}) = i$ for all $x \in [F_{k,A,\beta,\gamma}]^{k}$.  Let $f$ be a $2$-partition of $B$ in $2$ colors such that $f(\{\beta, \gamma\}) = i$ iff there exists $F_{k,A,\beta,\gamma} \in \mathcal{F}$ such that $\chi(\{\phi_{A}(x),\beta,\gamma\}) = i$ for all $x \in [F_{k,A,\beta,\gamma}]^{k}$.  Since $\omega_{1} \rightarrow (\omega_{1}, \omega)^{2}$, either there exists $B_{1} \in [B]^{\omega_{1}}$ homogeneous in $B$ with respect to $f$ in color $1$ or there exists $B_{0} \in [B]^{\omega}$ homogeneous in $B$ with respect to $f$ in color $0$.  In the color $1$ case, since $\omega_{1} \rightarrow (\omega + m, n)^{3}$ by the inductive hypothesis, either there exists $C_{1} \in [B_{1}]^{\omega + m}$ homogeneous with respect to $\chi$ in color $0$ or there exists $D_{1} \in [B_{1}]^{n}$ homogeneous with respect to $\chi$ in color $1$.  Let $x$ be an arbitrary element of the intersection of all the $F_{k,A,\beta,\gamma}$ such that $\{\beta,\gamma\} \in [D_{1}]^{2}$; then $\{x\} \cup D_{1} \in [\omega_{1}]^{n + 1}$ and it is homogeneous with respect to $\chi$ in color $1$.  In the color $0$ case, since there exists some $r < \omega$ such that $r \rightarrow (m, n + 1)^{3}$, we can consider such $r$ and let $C_{0} \in [B_{0}]^{r}$.  Let $F'$ be the intersection of all the $F_{j,k,A,\beta} \in \mathcal{F}$ such that $\beta \in C_{0}$, let $F''$ be the intersection of all the $F_{k,A,\beta,\gamma} \in \mathcal{F}$ such that $\beta,\gamma \in C_{0}$, and let $F = F' \cap F''$.  Let $a \in [F]^{j}$ and let $b_{i} \in [F]^{k - j}$ for all $i \in \omega$ so that $a << b_{0}$ and $b_{i} << b_{i + 1}$.  Let $A' = \{\phi_{A}(a \cup b_{i} \mid i < \omega\}$.  If there is no set of order type $n + 1$ homogeneous in $\omega_{1}$ with respect to $\chi$ in color $1$, then there is some $A'' \in [A']^{\omega}$ such that $\chi([A'']^{3}) = \{0\}$ and some $C'_{0} \in [C_{0}]^{m}$ such that $\chi([C'_{0}]^{3}) = \{0\}$.  Their union has order type $\omega + m$ and is homogeneous with respect to $\chi$ in color $0$.
\par
Jones then proves for all $k < \omega$ that if $\Psi_{k}$ pertains then there is a set of order type $k + 1$ homogeneous in $\omega_{1}$ with respect to $\chi$ in color $1$.  For $k = n$, if $\Psi_{n}$ pertains then $\omega_{1} \rightarrow (\omega + m, n + 1)^{3}$.  Let $x_{0,k}$ be the ordinal $k$ and let $x_{j,k}$ for $0 < j \leq k$ be the set obtained by substituting the greatest $j$ elements of the ordinal $k$ with the greatest $j$ elements of the ordinal $k + T(j)$ where $T(j) = \sum_{i \leq j}i$.  Trivially by the assumption of $\Psi_{k}$, $\chi(\{\phi_{A}(x_{j_{0},k}), \phi_{A}(x_{j_{1},k}), \phi_{A}(x_{j_{2},k})\}) = 1$ for $j_{0} < j_{1} < j_{2} \leq k$.
\par
Jones finally proves for all $k < \omega$ that either $\Phi_{j}$ pertains for some $j \leq k$ or $\Psi_{k}$ pertains.  For $k = n$, this suffices to prove $\omega_{1} \rightarrow (\omega + m, n + 1)^{3}$.  For $X \in [\omega_{1}]^{\omega_{1}}$, let $W_{l}(X)$ be the set of all $A \in [X]^{\omega^{l}}$ such that $\chi(\{\phi_{A}(x), \phi_{A}(y), \phi_{A}(z)\}) = 1$ for all $x, y, z \in [\omega]^{l}$ such that $x \cap y << x \setminus y << y \setminus x$, $y \cap z << y \setminus z << z \setminus y$, and $|x \cap z| < |x \cap y|$.  To facilitate the proof, Jones defines a proposition $\Psi_{l}'$ that implies $\Psi_{l}$ and then inducts on $l < k$ to demonstrate that $\Psi_{k}'$, and thus $\Psi_{k}$, pertains.

\begin{defn}[\textbf{$\Psi_{l}'$}]
	For all $X \in [\omega_{1}]^{\omega_{1}}$ there exist $A \in W_{l}(X)$ and $B \in [X]^{\omega_{1}}$ such that $A << B$ and $\chi(\{\phi_{A}(x), \phi_{A}(y), \beta\}) = 1$ for all $x, y \in [\omega]^{l}$ such that $x \cap y << x \setminus y << y \setminus x$ and for all $\beta \in B$.
\end{defn}

Since $[X]^{\omega^{0}} = [X]^{1}$ and $[\omega]^{0} = \{\varnothing\}$, $\Psi_{0}'$ trivially pertains.  Suppose there does not exist $j \leq k$ such that $\Phi_{j}$ pertains and suppose for all $l < k$ that $\Psi_{l}'$ pertains; then there exist $A_{0} \in W_{l}(X)$ and $B_{0} \in [X]^{\omega_{1}}$ that satisfy $\Psi_{l}'$.  For all $i < \omega$ let $A_{i + 1} \in W_{l}(B_{i})$ and $B_{i + 1} \in [B_{i}]^{\omega_{1}}$ such that they satisfy $\Psi_{l}'$.  Let $A = \bigcup_{i < \omega}A_{i}$; then $A \in W_{l + 1}(X)$.  Let $B$ be the set of all $\beta \in X$ such that every element of $A$ is less than $\beta$ and for all $j \leq l$ there exists some $F_{j,l + 1,A,\beta} \in \mathcal{F}$ such that $F_{j,l + 1,A,\beta}$ is homogeneous in color $1$ with respect to $\chi_{j,l + 1,A,\beta}$.  Since $\Phi_{l + 1}$ fails to pertain, there are co-countably many elements of $B$ and so $|B| = \omega_{1}$.  Since $F_{j,l + 1,A,\beta}$ is homogeneous in color $1$ with respect to $\chi_{j,l + 1,A,\beta}$ for all $\beta \in B$ and $j \leq l$, $F_{l + 1,A,\beta}$ is homogeneous in color $1$ with respect to $\chi_{j,l + 1,A,\beta}$ for all $\beta \in B$ and $j \leq l$.  Since $\{F_{l + 1,A,\beta} \mid \beta \in B\}$ is a filter base of cardinality $\omega_{1}$ and since $\mathfrak{p} > \omega_{1}$ by assumption, thus there exists a pseudo-intersection $F$ of that filter base.  Since $F$ differs from every $F_{l + 1,A,\beta}$ in only finitely many elements, thus for all $\beta \in B$ there exists some $N_{\beta} < \omega$ such that $F \setminus N_{\beta} \subseteq F_{l + 1,A,\beta}$.  Since $B$ is uncountable and $\omega_{1}$ is regular, there exists some $N < \omega$ such that $N_{\beta} = N$ for uncountably many $\beta$; let $B' \in [B]^{\omega_{1}}$ be the set of all $\beta$ such that $N_{\beta} = N$.  This implies $F \setminus N \subseteq F_{l + 1,A,\beta}$ for all $\beta \in B'$ and thus $F \setminus N$ is homogeneous in color $1$ with respect to $\chi_{j,l + 1,A,\beta}$ for all $\beta \in B'$ and for all $j \leq l$.  Finally, let $A' = \phi_{A}([F \setminus N]^{l + 1})$; then $A' \in W_{l + 1}(X)$, so $A'$ and $B'$ satisfy $\Psi_{l + 1}'$.  Since for all $l \leq k$, $\Psi_{l}'$ implies $\Psi_{l + 1}'$, thus $\Psi_{k}'$ and $\Psi_{k}$.

\subsection{Analysis of the Theorem}
Despite keeping the same resource available and permitting for an unbalanced relation, by switching from a pair relation to a triple relation we suddenly have smaller goals and thus a significantly weaker relation than the unbalanced Erd\H{o}s-Rado Theorem!  Rather than the $\omega + 1$ in finitely many colors by virtue of reflecting sets in a critical ordinal of an elementary submodel of $H(\lambda^{++})$ we only have an arbitrary finite ordinal for one other color.  Rather than the full $\omega_{1}$ in the zeroth color by virtue of a stationary subset of $\lambda^{+}$ we only have $\omega + m$ in the zeroth color for arbitrary finite ordinal $m$.
\section[FINAL REMARKS]{Final Remarks}
% \mysection{Final Remarks}
All three partition relations studied can be stated in terms of resource $\omega_{1}$ and all three only consider a finite collection of colors when starting with a resource of $\omega_{1}$.  The balanced and unbalanced Erd\H{o}s-Rado theorems are pair relations that make statements about $2$-partitions whereas Jones's partition relation is a triple relation that makes a statement about $3$-partitions.  Through a ramification argument the balanced Erd\H{o}s-Rado partition relation can reach a goal of $\omega + 1$ in any of its colors.  Through an elementary submodel argument the unbalanced Erd\H{o}s-Rado partition relation can reach a goal of $\omega_{1}$ in one color or $\omega + 1$ in any of the other colors.  Through a Ramsey filter argument together with forcing and reflection arguments Albin Jones's triple partition relation can reach a goal of $\omega + m$ for any finite $m$ in one color or any finite $n$ in another color.
\par
By considering $3$-partitions instead of $2$-partitions, the goals suddenly weaken and the colors suddenly reduce!  The triple partition relation is strong enough to breech $\omega + 1$ and reach a goal of $\omega + m$ for any finite $m$ in one of its colors, reaching farther than the balanced pair partition relation can in any of its colors, but it does so at the cost of only reaching a finite goal in its other color.  The triple partition relation compares less favorably with the unbalanced pair partition relation, which not only reaches the same goal of $\omega + 1$ in all its colors but even reaches the resource in one of its colors.  These comparisons, emphasized in Table \ref{t:comps}, raise natural questions.

\begin{quest}
	Does $\omega_{1} \rightarrow (\omega + m, (n)_{k})^{3}$ for all $m,n,k < \omega$?
\end{quest}

\begin{quest}
	Does $\omega_{1} \rightarrow (\omega + m, \omega + 1)^{3}$ for all $m < \omega$?
\end{quest}

\begin{quest}
	Does $\omega_{1} \rightarrow (\omega_{1}, (\omega + 1)_{n})^{3}$ for all $n < \omega$?
\end{quest}

Also noteworthy is that the balanced and unbalanced Erd\H{o}s-Rado partition relations make claims about the resource $(2^{<\kappa})^{+}$ for infinite cardinals $\kappa$ more generally while Jones's partition relation is restricted to a claim about the resource $\omega_{1}$.  Can his triple partition relation be ``lifted'' to more general claims about the same resources?

\begin{landscape}
    \begin{table}
    \centering
    {\renewcommand{\arraystretch}{1.25}
    \begin{tabular}{| l | c | c | c |}
    \hline
    \textbf{Partition Relation} & Balanced Erd\H{o}s-Rado & Unbalanced Erd\H{o}s-Rado & Jones Triple \\ \hline
    \textbf{Relation Kind} & Pair & Pair & Triple \\ \hline
    \textbf{Resource} & $\omega_{1}$ & $\omega_{1}$ & $\omega_{1}$ \\ \hline
    \textbf{Colors} & Any finite & Any finite & 2 \\ \hline
    \textbf{Goal in Color $0$} & $\omega + 1$ & $\omega_{1}$ & $\omega + m$ for all $m < \omega$ \\ \hline
    \textbf{Goal in Other Colors} & $\omega + 1$ & $\omega + 1$ & $n$ for all $n < \omega$ \\ \hline
    \textbf{Proof Method} & Ramification & Elementary submodel & Forcing \\ \hline
    \end{tabular}}
    \caption{Comparison of partition relations with resource $\omega_{1}$.}
    \label{t:comps}
    \end{table}

    \begin{table}
    \centering
    {\renewcommand{\arraystretch}{1.25}
    \begin{tabular}{| l | c | c | c |}
    \hline
    \textbf{Partition Relation} & Balanced Erd\H{o}s-Rado & Unbalanced Erd\H{o}s-Rado & Conjectural Triple \\ \hline
    \textbf{Relation Kind} & Pair & Pair & Triple \\ \hline
    \textbf{Resource} & $(2^{<\kappa})^{+}$ & $(2^{<\kappa})^{+}$ & $(2^{<\kappa})^{+}$ \\ \hline
    \textbf{Colors} & $<\text{cf}(\kappa)$ & $<\text{cf}(\kappa)$ & ? \\ \hline
    \textbf{Goal in Color $0$} & $\kappa + 1$ & $(2^{<\kappa})^{+}$ & ? \\ \hline
    \textbf{Goal in Other Colors} & $\kappa + 1$ & $\text{cf}(\kappa) + 1$ & ? \\ \hline
    \textbf{Proof Method} & Ramification & Elementary submodel & ? \\ \hline
    \end{tabular}}
    \caption{Comparison of partition relations with resource $(2^{<\kappa})^{+}$ for infinite cardinals $\kappa$.}
    \label{t:conjcomps}
    \end{table}
\end{landscape}

\newpage
\null
\vspace{1in}
% \bibliographystyle{plain}
% \bibliography{thesis.bib}
\printbibliography
\phantomsection
% \addcontentsline{toc}{section}{\MakeUppercase{\refname}}
\addcontentsline{toc}{section}{\texorpdfstring{\MakeUppercase{\refname}}{\refname}}
\nocite{roitman}

\end{document}